\newcommand\rot{\mathrm{rot}}
\newcommand\seg{{\mathrm{seg}}}
\newcommand\Id{\mathrm{Id}}
\newcommand\Kh{\mathrm{Kh}}
\newcommand\loc{\mathrm{loc}}
\newcommand\rel{\mathrm{rel}}
\newtheorem*{claim}{Claim}
\newtheorem*{conj}{Conjecture}
\newtheorem{thm}{Theorem}
\newtheorem{cor}[thm]{Corollary}
\newtheorem{lem}[thm]{Lemma}
\title{Grid diagrams and Khovanov homology}
\author{Jean-Marie Droz and Emmanuel Wagner}
\date{ }
\begin{document}

\maketitle

\abstract{We explain how to compute the Jones polynomial of a link from one of its grid diagrams and we observe a connection between Bigelow's homological definition of the Jones polynomial and Kauffman's definition of the Jones Polynomial. Consequently, we prove that the Maslov grading on the Seidel-Smith symplectic link invariant coincides with the difference between the homological grading on Khovanov homology and the Jones grading on Khovanov homology. We give some evidence for the truth of the Seidel-Smith conjecture.}

\section*{Introduction}

 Using symplectic geometry, Seidel and Smith constructed an invariant of oriented links in $S^3$ \cite{SS}. The Seidel-Smith invariant of an oriented link $L$ is defined as the homology $\Kh_{symp}^*(L)$ of a chain complex associated to $L$. The homological grading of this chain complex is denoted by $P$. They also conjectured that this invariant is isomorphic to the Khovanov link homology $\Kh^{*,*}(L)$:\\

\begin{conj}[Seidel and Smith]
For all $k \in \mathbb{Z}$, 
$$\Kh_{symp}^k(L)\cong \bigoplus_{\begin{array}{c}(i,j)\in \mathbb{Z}^2\\ i-j=k\end{array}}\Kh^{i,j}(L),$$
where $i$ is called the Khovanov homological grading and $j$ the quantum grading.
\end{conj}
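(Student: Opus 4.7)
The plan is to attack the conjecture in two stages: first, reduce it to a bigraded refinement by matching generators and gradings; second, match differentials. For the first stage, the strategy exploits the main insight announced in the abstract --- a dictionary between Bigelow's homological formula for the Jones polynomial and Kauffman's bracket --- applied through a grid diagram presentation of $L$. The same grid $G$ feeds both sides: on the Seidel--Smith side, $G$ determines a braid and plat closure from which the Lagrangians in the relative Hilbert scheme are built; on the Kauffman side, $G$ yields directly the state sum from which $\Kh^{*,*}(L)$ is assembled. This common input is what makes a direct comparison plausible.

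The first concrete step I would carry out is to match generators. Each Lagrangian intersection point underlying $\Kh_{symp}^*(L)$ should correspond, via the Bigelow--Kauffman dictionary, to a Kauffman state of $G$. Using that correspondence, I would compute the Maslov index of an intersection point purely in terms of state data --- writhe, rotation number, and the counts of $0$- and $1$-resolutions. The claim is that this combinatorial expression equals exactly $i-j$, where $(i,j)$ is the Khovanov bigrading of the corresponding generator. This proves the Maslov-grading identification stated in the abstract, and in particular yields the graded Euler characteristic identity
\[ \sum_k (-1)^k \mathrm{rk}\, \Kh_{symp}^k(L) = \sum_{(i,j)} (-1)^{i-j} \mathrm{rk}\, \Kh^{i,j}(L), \]
which is a necessary condition for the conjecture and already uses the full strength of the Bigelow side of the dictionary.

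The main obstacle, and what prevents this outline from becoming a complete proof, is promoting the generator-and-grading bijection to a chain-level isomorphism. The two differentials are of entirely different natures: pseudo-holomorphic discs in a high-dimensional K\"ahler manifold on the symplectic side, versus Frobenius-algebra cobordism maps on the combinatorial side. No general translation mechanism is known. Accordingly the proposal at this stage is to supply \emph{evidence} rather than a proof: verify total ranks in small cases via the grid presentation (making computer calculation feasible), check compatibility with disjoint union, connect sum, and mirror, and compare on classes of links where $\Kh^{*,*}$ is independently understood, such as alternating or $(2,n)$-torus links. A full proof will presumably require either a filtration or spectral-sequence argument degenerating at $E^2$, or a geometric cobordism relating the two constructions; neither is in sight, and this is the genuine hard part of the conjecture.
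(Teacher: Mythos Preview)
The statement is the Seidel--Smith conjecture itself, which the paper does \emph{not} prove; it is presented as an open conjecture, and the paper's contribution is the evidence encoded in Theorem~\ref{main}. You correctly recognise that comparing the two differentials is the genuine obstruction and that only partial evidence is attainable here, so in that sense your proposal is honest and your assessment of the difficulty is accurate.

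Where your proposal falls short of the paper is in the nature of the evidence. You suggest verifying ranks on small examples and checking naturality under connect sum, mirrors, etc.---a finite, case-by-case programme. The paper does something structurally stronger and uniform over all links. The injection $\phi:\mathcal{G}\to\mathcal{K}$ from Bigelow generators to enhanced Kauffman states is \emph{not} surjective; its image is the proper subset $\mathcal{H}$ cut out by local orientation conditions near each crossing. The paper then introduces an auxiliary grading $R$ (a winding number) on the full Khovanov complex, observes that the Khovanov differential respects the associated filtration, and that on the associated graded the generators in $\mathcal{K}\setminus\mathcal{H}$ assemble into a direct sum of acyclic hypercube complexes. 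A generalised Gaussian elimination (Lemma~\ref{Gauss}) then contracts the Khovanov complex onto a homotopy-equivalent complex generated precisely by $\mathcal{H}\cong\mathcal{G}$, with an explicit induced differential $\delta$. This establishes, for every link, the existence of a differential on the Bigelow generators with the correct grading behaviour and with homology $\Kh^{*,*}(L)$. That is a far stronger form of evidence than any finite check: it shows the Seidel--Smith chain groups are already ``large enough'' in every bigrading, for every link, and pins down exactly what remains---identifying $\delta$ with the Floer differential.

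A small inaccuracy in your sketch: you speak of a bijection between intersection points and Kauffman states, but the map lands only in the subset $\mathcal{H}$. Recognising this discrepancy, and then cancelling the surplus states in $\mathcal{K}\setminus\mathcal{H}$ via the $R$-filtration, is precisely where the paper's argument lives.
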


Manolescu showed \cite{M} that the generators of the Seidel-Smith chain complex are in one-to-one correspondence with the intersection points between homology representative arising in Bigelow's construction of the Jones polynomial \cite{Bi}. Moreover, this correspondence allows us to endow the Seidel-Smith generators with Bigelow's Jones grading $J$. Supporting the Seidel-Smith conjecture, it has been observed by Manolescu that there is, on small examples, enough generators in the Bigelow construction to have a complex generated by them whose homology coinsides with the Khovanov homology (taking into account the gradings).\\

The purpose of the present article is to introduce a differential on the graded free abelian group generated by the Bigelow intersection points, also called Bigelow's generators. This aim is achieved by proving that there is an injection of Bigelow's generators into enhanced Kauffman states \cite{Kh, Vi}. In other words, we will see the Seidel-Smith generators as a subset of the generators of the Khovanov chain complex. Moreover, we prove that, as expected, the gradings verify $P=i-j$ and $j=J$. Our main theorem is:


\begin{thm}\label{main}
There exists  a differential $\delta$ on the free abelian group $B$ generated by Bigelow's generators, that respects $J$, increases $P$ by 1 and such that the homology of the chain complex $(B,\delta)$ is the Khovanov homology.
\end{thm}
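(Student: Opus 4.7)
The plan is to realize $(B,\delta)$ as a deformation retract of the full Khovanov chain complex $CKh$. The injection of Bigelow's generators into enhanced Kauffman states identifies $B$, as a graded abelian group, with a direct summand of $CKh$; decomposing $CKh=B\oplus A$ with $A$ spanned by the non-Bigelow states, the Khovanov differential $d$ acquires four block components $d_{BB},d_{BA},d_{AB},d_{AA}$. The goal is to prove that $(A,d_{AA})$ is contractible, so that the perturbation
$$\delta = d_{BB} - d_{BA}\circ h\circ d_{AB},$$
with $h$ a chain contraction for $d_{AA}$, squares to zero and exhibits $(B,\delta)$ as quasi-isomorphic to $(CKh,d)$.

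To establish the contractibility of $A$ I would use repeated Gaussian elimination (the cancellation lemma in the form popularized by Bar-Natan for Khovanov homology): if one can produce a fixed-point-free involution $\iota$ on the non-Bigelow generators such that for every $a\in A$ the coefficient of $\iota(a)$ in $d(a)$ is $\pm 1$, then iterating cancellation along the orbits of $\iota$ kills $A$ and leaves a chain complex supported exactly on $B$. Since the Khovanov differential has bidegree $(+1,0)$ and Gaussian elimination respects bidegree shifts, the induced $\delta$ will automatically preserve $j$ and raise $i$ by one; the identifications $P=i-j$ and $J=j$ proved earlier in the paper then translate this into the asserted behaviour of $\delta$ on the Bigelow gradings.

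The crux of the argument, and the main obstacle, is the explicit construction of $\iota$. A Bigelow generator corresponds, under our injection, to an enhanced Kauffman state satisfying a specific combinatorial condition coming from the arcs used in Bigelow's configuration on the punctured disk; the enhanced states failing this condition should pair up naturally, each with a canonical partner obtained by a local modification at a distinguished site where the Bigelow condition is violated. I expect such a site to be detectable by inspecting either a particular crossing or a particular circle of the Kauffman resolution, and the corresponding local modification to be a single edge in the Khovanov cube, which would automatically produce the $\pm1$ coefficient along $d$. Verifying that this modification is involutive, that it sends $A$ into $A$, and that it matches canceling partners under $d$ constitutes the combinatorial heart of the proof; an additional care is needed to order the cancellations so that the surviving coefficients remain units at each step, ensuring the procedure terminates on $B$.
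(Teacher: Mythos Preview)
Your strategic outline matches the paper's proof exactly: decompose $C_{\Kh}=B\oplus A$ along the injection $\phi$, show that the piece $A$ spanned by $\mathcal{K}\setminus\mathcal{H}$ is contractible, and apply a homological perturbation (the paper packages this as Lemma~\ref{Gauss}, which yields your formula $\delta=d_{BB}+d_{BA}\,h\,d_{AB}$). The identification of gradings via Theorems~\ref{P=i-j} and~\ref{J=j} is also used exactly as you say.

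The one genuine gap is the part you yourself flag as the crux: you do not yet have the pairing~$\iota$, nor the mechanism guaranteeing that iterated cancellation terminates on~$B$. The paper fills this gap with an idea you are missing, namely an auxiliary grading $R$ on enhanced Kauffman states, defined as the total winding number of the oriented circles about one chosen point in each complementary region of the diagram. The key observation is that among the eighteen local nonzero components of the Khovanov differential, exactly six preserve $R$, and those six are precisely the ones connecting two states each having a \emph{forbidden} local orientation (Figure~\ref{Forbidden}) at the crossing in question; the remaining twelve strictly decrease $R$. Hence the $R$-preserving part of the differential lives entirely on $\mathcal{K}\setminus\mathcal{H}$, and on that subspace it decomposes as a direct sum of cube complexes---one cube for each choice of a set of crossings carrying forbidden orientations---with every edge map equal to $\pm\Id$. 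Such cubes are visibly contractible, so a single application of Lemma~\ref{Gauss} suffices; no ordering of cancellations is needed. In effect, the grading $R$ furnishes not just an involution but the full hypercube structure that replaces your hoped-for $\iota$. Your iterative Gaussian-elimination scheme could be made to work (it is the discrete--Morse/acyclic-matching version of the same idea), but the filtration by $R$ is what makes the combinatorics transparent.
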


Our complex $(B,\delta)$ is homotopic to the original combinatorially defined complex of Khovanov \cite{Kh}. This gives strong evidence supporting the Seidel-Smith conjecture by generalizing Manolescu's observation. Our main theorem remains true for odd Khovanov homology \cite{ORS}, see Theorem \ref{odd}.\\

The techniques used below are of intrinsic interest. Namely, we develop a combinatorial description of the Jones polynomial and of Khovanov homology in terms of {\it rectangular diagrams} (see Section \ref{rect}). In particular, our result gives an alternative proof of the 
equivalence of Bigelow's definition of the Jones polynomial. In addition, grid diagrams appear in the combinatorial description of  link Floer homology \cite{MOST}. This new description will be used in future work to investigate the relation between Khovanov type homologies and Heegaard-Floer type homologies (see e.g. \cite{OS2, RKH, GW}).\\

{\bf Plan of the paper.} In the first section, we introduce all definitions and notations necessary to compute the Jones polynomial from a grid diagram. In the second section, we construct an injection of Bigelow's generators into enhanced Kauffman's states. In the third section, we prove relations between the gradings. The last section is devoted to the proof of the main theorem.\\

{\bf Aknowledgements. }
We would like to thank Anna Beliakova for helpful conversations and pointing to us the paper of Manolescu. The idea of using rectangular diagrams for the Bigelow setting was suggested to us by Anna Beliakova. The present article would not exist without the kind encouragements of Benjamin Audoux.\\

\section{Definitions and notations}\label{rect}

\noindent {\bf Grid diagrams and links.}
A {\it grid diagram} of size $n\in \mathbb{N}-\{0,1\}$ is a $(n\times n )-$grid whose squares may be decorated by either an $O$ or an $X$ so that each column and each row contains exactly one $O$ and one $X$. The number $n$ is called the {\it complexity} of the grid diagram. Following \cite{MOST}, we denote by $\mathbb{O}$ the set of $O$'s and $\mathbb{X}$ the set of $X$'s. The $X$'s and the $O$'s are called the {\it punctures} of the grid diagram.

From any grid diagram, one can construct an oriented link diagram. For this purpose, one should join the $X$ to the $O$ in each column by a vertical segment and the $O$ to the $X$ in each row by an horizontal segment that passes under all the vertical segments. We choose the orientation to be from the $O$'s to the $X$'s on the horizontal lines and from the  $X$'s to the $O$'s on the vertical lines. This produces a {\it planar rectangular diagram} for an oriented link in $S^3$. Any oriented link in $S^3$ admits a planar rectangular diagram \cite{Dy}. An example is shown in Figure \ref{grid}.\\
\begin{figure}[!h]
\begin{center}
\includegraphics{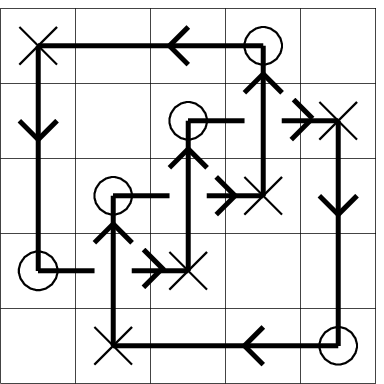}
\caption{Grid diagram and planar rectangular diagram for the trefoil}
\label{grid}
\end{center}
\end{figure}

\noindent {\bf The set $\mathcal{G}$ of the Bigelow generators.}
 Let $D$ be a planar rectangular diagram of complexity $n$. For each vertical segment, let us draw an oriented curve which winds clockwise around the puncture at the top of the segment and counterclockwise around the puncture at the bottom of the segment. The oriented curves obtained are called {\it figure-eights}. We choose the figure-eights very narrow and as short as possible. We assume that the figure-eights intersect transversally and at most twice the horizontal segments, do not intersect each other and have only one transversal self-intersection. 
We denote by $\mathcal{G}$ the set of unordered $n$-tuples of intersection points between horizontal segments and (vertical) figure-eights such that each (vertical) figure-eight and each (horizontal) segment contains exactly one point.
 We denote by $\mathcal{Z}$ the set of intersection points between figure-eights and horizontal segments. We define ${\bf x}\in\mathcal{G}$ (${\bf o}\in\mathcal{G}$, respectively) as the set of points of $\mathcal{Z}$ that are nearest to the $X$'s (the $O$'s, respectively), see Figure \ref{figureight} for an example. To each element $g \in \mathcal{G}$, one can associate a unique  $n$-tuple, $\overline{g}=(\overline{g_1},\ldots,\overline{g_n})$ in which $\overline{g_i}$ ($i=1,\ldots,n$) is the $X$, $O$ or crossing nearest to $g_i$.\\
\begin{figure}[!h]
\begin{center}
\includegraphics{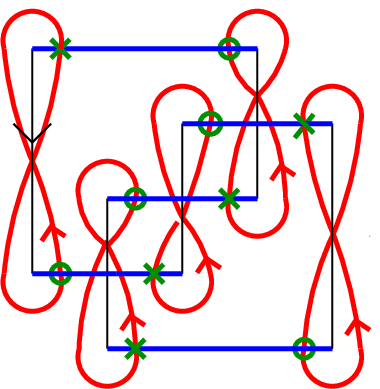}
\caption{Oriented figure-eights and particular elements {\bf x} and {\bf o} of $\mathcal{G}$ }
\label{figureight}
\end{center}
\end{figure}

\noindent {\bf Gradings on $\mathcal{G}$.}
We will define three gradings on $\mathcal{G}$: $\overline{P}$, $T$ and $Q$. For the first grading $\overline{P}:\mathcal{G}\rightarrow \mathbb{Z}$, let us orient figure-eights as in Figure \ref{figureight}.  Each $z\in\mathcal{Z}$ is given an integer $p(z)$: $+1$ if the part of the figure-eight on which $z$ sits is oriented upward, $0$ otherwise. Given $g=(z_1,\ldots, z_n)\in\mathcal{G}$, $\overline{P}(g)=p(z_1)+\dots+p(z_n)$. For example, in Figure \ref{figureight}, $\overline{P}({\bf x})=2 $ and $\overline{P}({\bf o})=2$.\\ 

Given finite sets of points $A$ and $B$ in the real plane, we define $\mathcal{I}(A,B)$ to be the number of pairs $(a_1,a_2)\in A$ and $(b_1,b_2)\in B$ such that $a_1<b_1$ and $a_2<b_2$. The grading $T:\mathcal{G}\rightarrow \mathbb{Z}$ is defined by $T(g)=\mathcal{I}(g,g)$, for $g\in \mathcal{G}$.\\

We define a relative grading $Q$ on $\mathcal{G}$. Consider two elements $g=(g_1,\ldots,g_n)$ and $h=(h_1,\ldots,h_n)$ in $\mathcal{G}$. To define the difference $Q(g)-Q(h)$, we consider the loop $\gamma(g,h)$ in the configuration space of $n$ points in $\mathbb{R}^2$ 
 defined as follows (see also \cite{Bi}). We start at $g$, go along the horizontal segments to $h$, then go back along the vertical figure-eights to $g$. We can also see $\gamma(g,h)$ as a family of closed immersed oriented curves in $\mathbb{R}^2$. Then $Q(g)-Q(h)$ is defined to be the sum of the winding numbers of these closed immersed curves around the $X$'s and the $O$'s. In other words, for each $X$ and $O$, we count algebraically the number of times each immersed curve goes around the puncture: $+1$ for each time a curve goes around a puncture counterclockwise and $-1$ for each time a curve goes around a puncture clockwise and we take the sum over all curves and all punctures as relative grading. We define the absolute grading $Q$ by setting $Q({\bf x})=0$.\\

\noindent {\bf Normalization of the gradings and the Jones polynomial.} We introduce two classical quantities associated to an oriented link diagram $D$ (and hence to an oriented planar rectangular diagram).
Given an oriented link diagram $D$, we resolve all the
crossings of $D$ as in Figure \ref{4},
\begin{figure}[!h]
\begin{center}
\includegraphics[height=1cm]{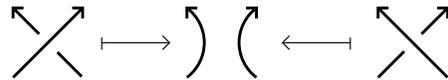}
\caption{Oriented resolution}
\label{4}
\end{center}
\end{figure}
we obtain a disjoint union of oriented circles embedded in $\mathbb{R}^2$. We call these circles the {\it Seifert circles} of $D$. 
The {\it rotation number} of $D$, denoted by $\rot(D)$, is the sum of the contributions of the circles. The contribution of a Seifert circle is $+1$ if it is oriented counterclockwise and $-1$ otherwise. Given a crossing $c$ of an oriented link diagram $D$, we define $w(c)$ as in Figure \ref{cross}. We define the {\it writhe} $w(D)$ of $D$, $$w(D)=\sum_{c\mathrm{\,crossings\, of\, } D} w(c).$$
We denote by $n_+$ the number of positive crossings and by $n_-$ the number of negative crossings of $D$. We have $w(D)=n_+-n_-$.

\begin{figure}[!h]
\begin{center}
\includegraphics[height=1cm]{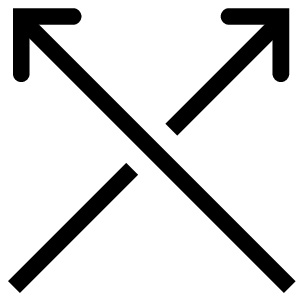}\hspace{5.6cm}\includegraphics[height=1cm]{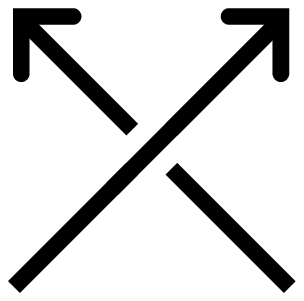}
\end{center}
\begin{center}
Negative crossing $c$, $w(c)=-1$\hspace{1.5cm}Positive crossing $c$, $w(c)=+1$
\end{center}
\caption{Crossings}
\label{cross}
\end{figure}

We define the Jones grading $J$ and the homological grading $P$. Given a planar rectangular diagram $D$, for an element $g\in \mathcal{G}$, we set
 $$J(g)=2(T(g)-Q(g))-2T({\bf x})+\rot(D)+w(D)\mbox{ and }$$
$$P(g)=\overline{P}(g)-\overline{P}({\bf x })-\rot(D)-w(D).$$

We prove below that the Jones polynomial of an oriented planar rectangular diagram $D$ can be written as:\\
\begin{equation}
V(D)(q)=\sum_{g \in \mathcal{G}} {(-1)}^{P(g)} q^{J(g)}.
\label{JFromB}
\end{equation}

\section{Bigelow's generators and enhanced Kauffman states}

\noindent {\bf From plat closures to rectangular diagrams.}
\begin{figure}[!h]
\begin{center}
\includegraphics[height=4cm,width=8cm]{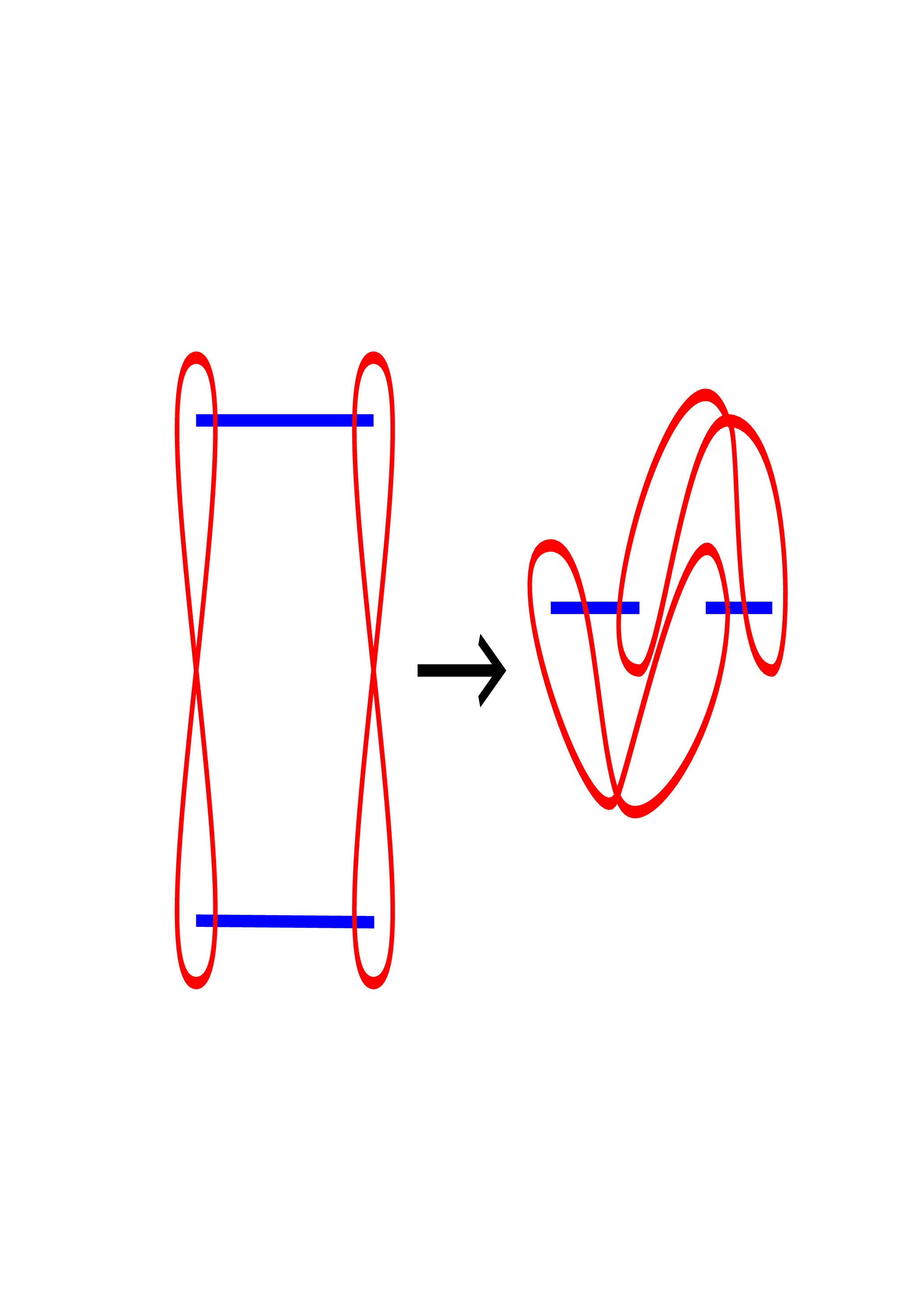}
\caption{From a rectangular diagram to a braid closure}
\label{bigTrans}
\end{center}
\end{figure}
Equation (\ref{JFromB}) is a simple reformulation of Bigelow's homological definition of the Jones polynomial. In \cite{Bi}, Bigelow computes the Jones polynomial of a link represented as the plat closure of a braid. Our set $\mathcal{G}$ is seen in the setting of Bigelow's definition as the set of intersection points between a cycle in homology represented by the figure-eights and another cycle represented by the horizontal segments.\\

In \cite{M} (Section 3, page 15 and 16), Manolescu explains how the plat closure of a braid can be given as a flattened braid diagram. A rectangular diagram can easily be transformed into a flattened braid diagram. Starting with a planar rectangular diagram in the plane with figure-eights drawn, we apply a diffeomorphism of the plane sending all horizontal segments of the rectangular diagram to consecutive non-intersecting segments on a line, see Figure \ref{bigTrans}. We obtain a flattened braid diagram. Our definitions of the different gradings are obtained by ``pulling back'' the gradings originally defined by Bigelow along the diffeomorphism defined above. Notice that we use the notations and normalizations of Manolescu.\\

\noindent {\bf The injection $\phi$ from $\mathcal{G}$ to $\mathcal{H}$.} Fix an oriented rectangular diagram $D$. We construct a bijection between the set $\mathcal{G}$ and a subset of the enhanced Kauffman states of an oriented link diagram. An enhanced Kauffman state of $D$ is a choice of one resolution for each crossing of $D$ (see Figure \ref{kauf}), together with a choice of orientation on every resulting circle, see Figure \ref{kstate} for an example. We call a choice of resolution for each crossing of $D$ a {\it resolution} of $D$. We define $\mathcal{K}$ to be the set of enhanced Kauffman states.
\begin{figure}[!h]
\begin{center}
\includegraphics[height=1cm]{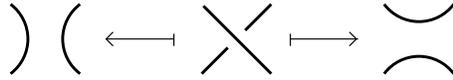}
\caption{Kauffman resolutions}
\label{kauf}
\end{center}
\end{figure}

\begin{figure}[!h]
\begin{center}
\includegraphics{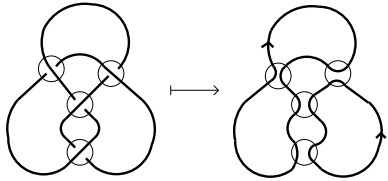}
\caption{Example of an enhanced Kauffman state}
\label{kstate}
\end{center}
\end{figure}

We define $\mathcal{H}$ to be the set of enhanced Kauffman states associated to $D$ such that around each crossing, the arcs coming from the resolutions are oriented as in one of the configurations depicted in Figure \ref{possible}.

\begin{figure}[!h]
\begin{center}
\includegraphics[height=1.4cm]{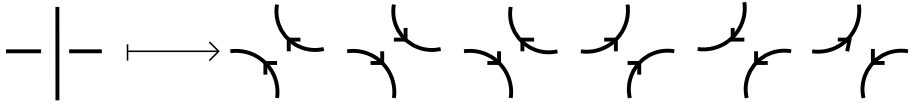}
\caption{ Allowed orientations around a resolution in an enhanced Kauffman state}
\label{possible}
\end{center}
\end{figure}
Notice that the enhanced Kauffman states that are not in $\mathcal{H}$ are those for which at least near one crossing the orientation is as in Figure \ref{Forbidden}.\\

\begin{figure}[!h]
\begin{center}
\includegraphics{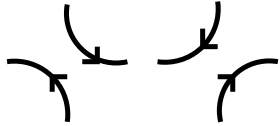}
\caption{Forbidden orientations around a resolution in an enhanced Kauffman state}
\label{Forbidden}
\end{center}
\end{figure}

Given $g=(g_1,\ldots,g_n) \in \mathcal{G}$, we explain how to associate to $g$ an element $\phi(g)\in \mathcal{H}$, see e.g. Figure \ref{exampleg}.
Thinking of $D$ as a set of horizontal and vertical segments, we can subdivide all the segments in $D$ at the crossings of $D$ to obtain a set of smaller segments that we denote by $\seg(D)$.
To an element $g \in \mathcal{G}$ corresponds an orientation of the segments of $\seg(D)$ by the following two rules:
\begin{itemize}
\item{A vertical segment $s$ is oriented upward when the intersection point of $\overline{g}$ nearest to the line containing $s$ is lower than $s$. It is oriented downward otherwise.}
\item{An horizontal segment $s$ is oriented leftward when the intersection point of $\overline{g}$ nearest to the line containing $s$ is at the left of $s$. It is oriented rightward otherwise.}
\end{itemize}

This means that around an intersection point of $\overline{g}$ the orientation of $\seg(D)$ looks like in Figure \ref{bifurcation}. See the Figure \ref{exampleg}, the diagram in the middle for an another example.

\begin{figure}[!h]
\begin{center}
\includegraphics{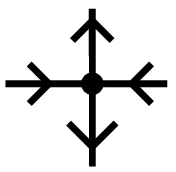}
\caption{Orientation near a $\overline{g_i}$}
\label{bifurcation}
\end{center}
\end{figure}

\begin{figure}[!h]
\begin{center}
\includegraphics{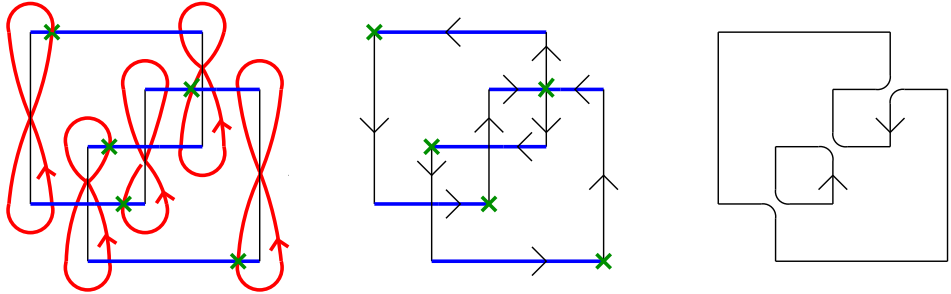}
\caption{An element of $\mathcal{G}$, the orientation it induces on $\seg(D)$ and its corresponding enhanced Kauffman state $\phi(g)$}
\label{exampleg}
\end{center}
\end{figure}
We obtain the enhanced Kauffman state $\phi(g)$ from the orientation on $\seg(D)$ by specifying a resolution of each crossing of $D$. The orientation on the circles of $\phi(g)$ being induced in the obvious way by the orientation on $\seg(D)$.
For a crossing $c$, if there is no intersection point of $g$ near $c$, we resolve $c$ in the only way that is coherent with the orientation on $\seg(D)$ (see Figure
 \ref{4}). If there is an intersection point $\overline{g_i}$ of $\overline{g}$ in $c$, the resolution depends on the position of $g_i$. There are four possible cases and Figure \ref{gi} describes how to resolve in each case. Since the orientation on $\seg(D)$ is coherent with our choices of resolutions, the function $g\longrightarrow \phi(g)$ is well defined.

\begin{figure}[!h]
\begin{center}
\includegraphics{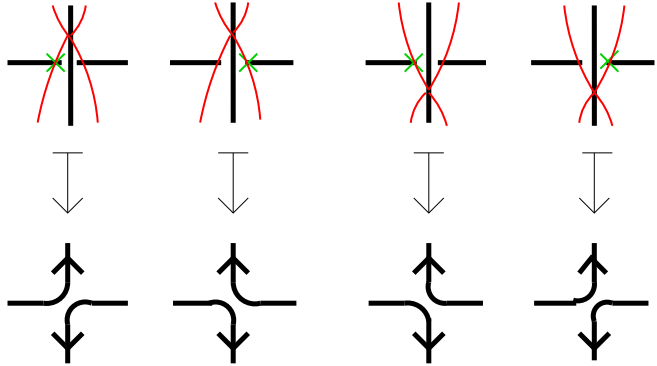}
\caption{Resolution near a $g_i$}
\label{gi}
\end{center}
\end{figure}

\begin{thm}
Let $D$ be an oriented planar rectangular diagram.
The application $\phi : g \mapsto \phi(g)$ defines a bijection between $\mathcal{G}$ and $\mathcal{H}$.
\end{thm}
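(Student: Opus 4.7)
The plan is to construct an explicit inverse $\psi: \mathcal{H} \to \mathcal{G}$ and then verify both $\psi \circ \phi = \mathrm{id}$ and $\phi \circ \psi = \mathrm{id}$. The first step is to observe that any $h \in \mathcal{H}$ canonically induces an orientation on every element of $\seg(D)$ by restricting the orientations of the circles in $h$. The defining property of $\mathcal{H}$ --- that no crossing bears one of the forbidden configurations of Figure \ref{Forbidden} --- is precisely what guarantees that this orientation is ``rectilinear'' in the following sense: along each row (resp.\ column), the horizontal (resp.\ vertical) subsegments carry orientations that reverse only at a controlled set of \emph{switching points}, each located either at a puncture or at a crossing.

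Next I would use these switching points to reconstruct $g$. For each column I expect exactly one switching point among its vertical subsegments, and this location determines where $g_i$ should sit on the corresponding figure-eight. If the switching point lies at a puncture, the orientation rules from the definition of $\phi$ dictate on which side of the puncture $g_i$ sits; if it lies at a crossing, then the position of $g_i$ among the four possibilities near that crossing is read off from the chosen resolution via Figure \ref{gi}. A parallel argument along each row gives the analogous information, and I would check that the two viewpoints yield a consistent $n$-tuple $\psi(h)$ placing exactly one point on each figure-eight and exactly one on each horizontal segment, hence an element of $\mathcal{G}$.

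The identity $\psi \circ \phi = \mathrm{id}$ then follows by unwinding the definitions: $\phi(g)$ was designed precisely so that its induced orientation on $\seg(D)$ records the positions of the $\overline{g_i}$'s and its crossing resolutions record the fine positions of the $g_i$'s, which is exactly what $\psi$ extracts. The identity $\phi \circ \psi = \mathrm{id}$ reduces to verifying that the resolutions chosen by $h$ at each crossing coincide with those prescribed by Figure \ref{gi} for $\psi(h)$. This is immediate at a crossing containing some $\overline{g_i}$, and at a crossing with no nearby $g_i$ it follows because in $\mathcal{H}$ the resolution is uniquely forced by the orientation of the incoming arcs.

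The main obstacle will lie in the second step: showing that the local data read from $h$ at each column and each row assembles into a \emph{globally} consistent $n$-tuple, i.e.\ that exactly one switching point occurs per column and per row and that the two independent reconstructions agree. This is where the hypothesis $h \in \mathcal{H}$ rather than $h \in \mathcal{K}$ is essential: a forbidden local configuration would create an inconsistent orientation pattern (several switches along a single figure-eight, or mismatched choices between a figure-eight and its adjacent horizontal segments) that could not correspond to any $g \in \mathcal{G}$. The verification should proceed by a case analysis at each crossing using the allowed configurations depicted in Figure \ref{possible}, together with a count ensuring that the total number of switching points matches the number of figure-eights and horizontal segments.
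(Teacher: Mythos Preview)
Your proposal is correct and follows essentially the same route as the paper: construct an inverse $\psi$ by reading the orientation that $h\in\mathcal{H}$ induces on $\seg(D)$, locate the unique switching point on each vertical and each horizontal segment, and use Figure~\ref{gi} to recover the fine position of each $g_i$. The only difference is in how the key ``at most one switch per segment'' claim is established: you propose a case analysis at each crossing together with a global count, whereas the paper gives a slightly slicker direct argument --- if a vertical segment carried two switches of the allowed type, then between them there would necessarily be a switch of the forbidden type, contradicting $h\in\mathcal{H}$.
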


\begin{proof}

We define a map $\psi$ from $\mathcal{H}$ to $\mathcal{G}$, see Figure \ref{reciproc} for an example. Given an enhanced Kauffman state $h$ in $\mathcal{H}$, consider the orientation induced by $h$ on $\seg(D)$. 
\begin{figure}[!t]
\begin{center}
\includegraphics{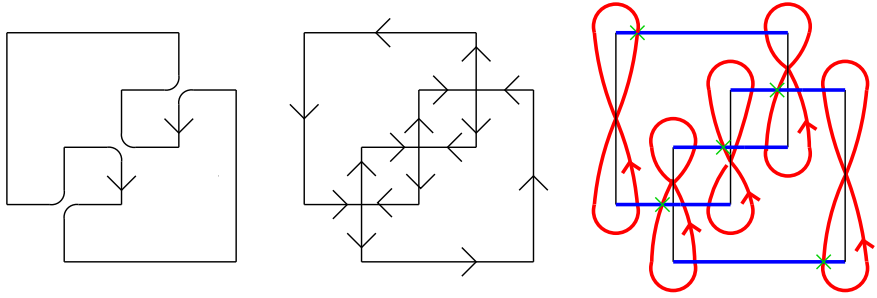}
\caption{Orientation induced on $D$ by an enhanced Kauffman state and associated element of $\mathcal{G}$}
\label{reciproc}
\end{center}
\end{figure}
\begin{figure}[!t]
\begin{center}
\includegraphics{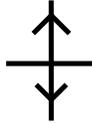}
\caption{Switch of orientation}
\label{switch}
\end{center}
\end{figure}
We claim that on a vertical segment of $D$, there is at most one switch of orientation of the kind shown in Figure \ref{switch}.
This follows from the fact that if there were two, then between them, there would be one switch of the type depicted in Figure \ref{badswitch}, which is impossible since enhanced Kauffman state in $\mathcal{H}$ do not contain resolutions oriented as in Figure \ref{Forbidden}.
\begin{figure}[!h]
\begin{center}
\includegraphics{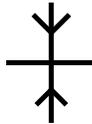}
\caption{Forbidden switch of orientation}
\label{badswitch}
\end{center}
\end{figure}
A similar argument proves that, on each horizontal segment, there is at most one switch of orientation (of the allowed type). It follows that one can associate to an element $h\in \mathcal{H}$, an $n$-tuple  $\overline{\psi(h)}$ of $X$'s, $O$'s or crossings, such that for any $g\in \mathcal{G}$, $\overline{g}=\overline{\psi(h)}$ implies that $g$ and $h$ induce the same orientation on $\seg(D)$. By using Figure \ref{gi}, we replace crossings in $\overline{\psi(h)}$ by elements of $\mathcal{Z}$ according to the kind of resolutions around the crossings and the self-intersections of the figure-eights. We replace punctures in $\overline{\psi(h)}$ by their nearest point in $\mathcal{Z}$. We obtain a $n$-tuple $\psi(h)\in \mathcal{G}$. Since clearly $\phi $ and $\psi$ are inverses, $\phi$ defines a bijection between $\mathcal{G}$ and $\mathcal{H}$.
\end{proof}

\section{Gradings}

\noindent {\bf Khovanov homological grading and quantum grading.}
We introduce two gradings $i$ and $j$ on $\mathcal{H}$. By the previous theorem, they induce two gradings on $\mathcal{G}$. Given an element $h\in\mathcal{H}$, consider the underlying resolution $r(h)$ and define $\overline{i}(h)$ to be the number of resolutions in $r(h)$ of the type depicted in Figure \ref{i}. Then 
$i(h)$=$\overline{i}(h)-n_-$. We define $j(h)=\rot(h)+\overline{i}(h)+n_+-2n_-$. Notice that $J({\bf x})=j({\bf x})$.
\begin{figure}[!h]
\begin{center}
\includegraphics{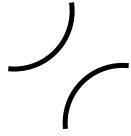}
\caption{Resolution used in the definition of Khovanov homological grading on $\mathcal{H}$}
\label{i}
\end{center}
\end{figure}
As a consequence, we define for any $g\in\mathcal{G}$,
\begin{equation}
j(g)=j(\phi(g))=\rot(\phi(g))+\overline{i}(\phi(g))+n_+-2n_-,
\label{JeRI}
\end{equation}
\begin{equation}
i(g)=i(\phi(g))=\overline{i}(\phi(g))-n_-.\label{IeIbar}
\end{equation}
We express the grading $j(g)$ directly from $g$ described as a set of intersection points. Let us decompose the grading $j$ as a sum of three gradings $j_1$, $j_2$ and $j_3$.\\

We define $j_1(g)$ to be  an algebraic count on the corners of the rectangular diagram. Each corner of the rectangular diagram has a contribution of $+1$ or $-1$. Given $g$, $j_1(g)$ is the sum over all corners of these contributions. For the contributions of each corners, see Figure \ref{j1+} and Figure \ref{j1-}.\\
\begin{figure}[!h]
\begin{center}
\includegraphics{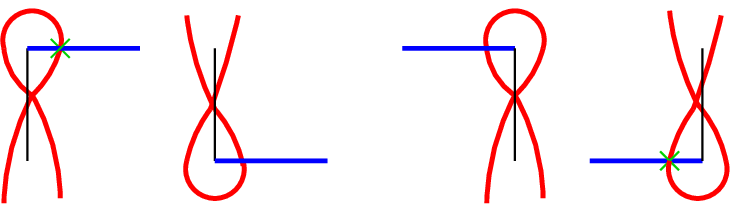}
\caption{Corners with a contribution to $j_1$ of $+1$}
\label{j1+}
\end{center}
\end{figure}
\begin{figure}[!h]
\begin{center}
\includegraphics{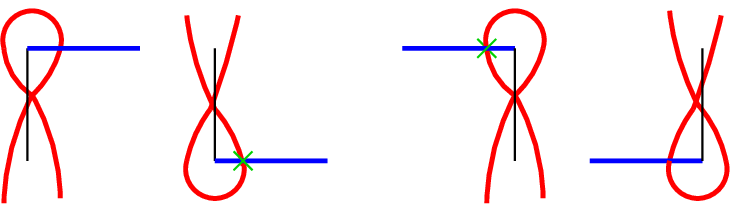}
\caption{Corners with a contribution to $j_1$ of $-1$}
\label{j1-}
\end{center}
\end{figure}

We define $j_2(g)$ to be an algebraic count on the crossings that are near an intersection point of $g$. For $g=(g_1,\ldots,g_n)$, we consider $\overline{g}=(\overline{g_1},\ldots,\overline{g_n})$. If $\overline{g_i}$ is on a crossing, it has a contribution of $+1$ or $-1$ depending on the positions of $g_i$ and of the self-intersection of the corresponding figure-eight, see Figure \ref{j2+} and Figure \ref{j2-}.\\

\begin{figure}[!h]
\begin{center}
\includegraphics{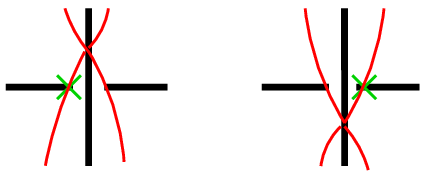}
\caption{Crossings with a contribution to $j_2$ of $+1$}
\label{j2+}
\end{center}
\end{figure}
\begin{figure}[!h]
\begin{center}
\includegraphics{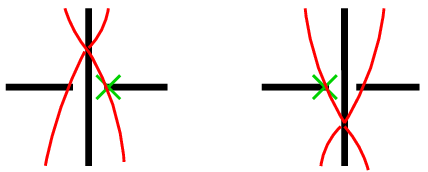}
\caption{Crossings with a contribution to $j_2$ of $-1$}
\label{j2-}
\end{center}
\end{figure}

We define $j_3(g)$ to be an algebraic count on the crossings that do not have a point of $g$ nearby. For $g=(g_1,\ldots,g_n)$, consider $\overline{g}=(\overline{g_1},\ldots,\overline{g_n})$ and consider the crossings where there is no $\overline{g_i}$. Each of these crossings has a contribution of $+1$ or $-1$ depending on the position of the $g_i$ that is on the same figure-eight and on the position of the $g_j$ that is on the same horizontal segment, see Figure \ref{j3+} and Figure \ref{j3-}. Notice that $j_3$ does not depend on the position of the self-intersection.\\

 From previous definitions, we deduce that, for all $g\in \mathcal{G}$,
$$\rot(\phi(g))=\frac{j_1(g)}{4}+\frac{j_2(g)}{2} \mbox{ and }$$
\begin{equation}
i(g)=i(\phi(g))=\frac{j_2(g)}{2}+\frac{j_3(g)}{2}+\frac{n_+-n_-}{2}.\label{combDef0}
\end{equation}
Hence,
\begin{equation}
j(g)=\frac{j_1(g)}{4}+j_2(g)+\frac{j_3(g)}{2}+\frac{3}{2}(n_+-n_-), \mbox{ for all } g\in \mathcal{G}.\label{combDefJ}
\end{equation}
\begin{figure}[!h]
\begin{center}
\includegraphics{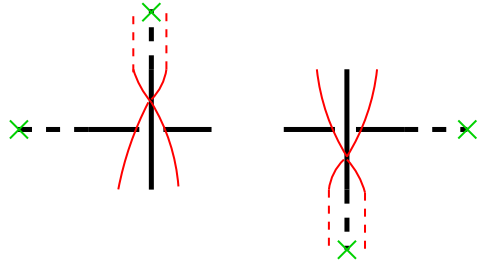}
\caption{Crossings with a contribution to $j_3$ of $+1$}
\label{j3+}
\end{center}
\end{figure}
\begin{figure}[!h]
\begin{center}
\includegraphics{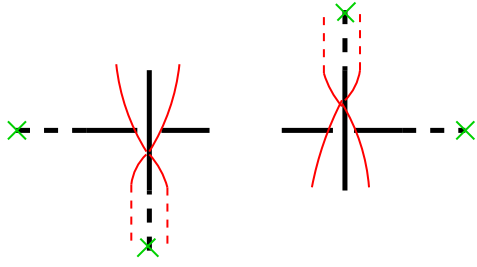}
\caption{Crossings with a contribution to $j_3$ of $-1$}
\label{j3-}
\end{center}
\end{figure}

\noindent {\bf Relations between gradings.} We prove that the gradings verify $P=i-j$ and $J=j$.

\begin{thm}\label{P=i-j}
Let $D$ be an oriented planar rectangular diagram.
For all $g\in\mathcal{G}$,
$$P(g)=i(g)-j(g).$$
\end{thm}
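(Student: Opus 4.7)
My plan is to reduce the identity to a rotation-number identity for Kauffman states and then establish that by a local invariance argument. Substituting the combinatorial formulas (\ref{combDef0}) and (\ref{combDefJ}) for $i(g)$ and $j(g)$, together with the equality $\rot(\phi(g)) = \frac{j_1(g)}{4} + \frac{j_2(g)}{2}$ noted just before them, a short calculation yields
\[
i(g) - j(g) = -\frac{j_1(g)}{4} - \frac{j_2(g)}{2} - (n_+ - n_-) = -\rot(\phi(g)) - w(D).
\]
Comparing with the definition $P(g) = \overline{P}(g) - \overline{P}(\mathbf{x}) - \rot(D) - w(D)$, the theorem becomes equivalent to
\[
\overline{P}(g) + \rot(\phi(g)) = \overline{P}(\mathbf{x}) + \rot(D),
\]
and this reformulated identity is what I actually intend to prove.

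For the base case $g = \mathbf{x}$, I would observe that the orientation rules defining $\phi$ send $\mathbf{x}$ to the oriented (Seifert) resolution of $D$ carrying the original link orientation: since each $\overline{x_i}$ is the $X$-puncture nearest to $x_i$, every vertical segment is oriented toward its $X$ and every horizontal segment away from its $O$, matching the ambient orientation of the link. Consequently $\phi(\mathbf{x})$ is obtained from $D$ by the oriented resolution at every crossing, and $\rot(\phi(\mathbf{x})) = \rot(D)$ holds tautologically. It remains to show that the quantity $\overline{P}(g) + \rot(\phi(g))$ is independent of $g \in \mathcal{G}$.

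I would establish this invariance by connecting an arbitrary $g \in \mathcal{G}$ to $\mathbf{x}$ through elementary moves—swaps of two coordinates among the four intersection points common to a pair of figure-eights and a pair of horizontal segments—and checking that each move changes $\overline{P}(g)$ and $\rot(\phi(g))$ by opposite amounts. The per-point quantity $p(z_i)$ depends only on the local orientation of the figure-eight at $z_i$, while the change in $\rot(\phi(g))$ is a sum of quarter-turn contributions at the corners and resolved crossings affected by the move, controlled by the local pictures in Figures \ref{j1+}--\ref{j2-}. The main obstacle is precisely this local case analysis: a single swap can simultaneously modify several corners, toggle the resolution type at nearby crossings, reverse the orientation of an entire arc, and even merge or split Kauffman circles, so one has to systematically enumerate the local configurations—positions of the $g_i$'s relative to the $X$'s, $O$'s, crossings, and the self-intersection of each figure-eight—and verify in each case that the change in $\rot(\phi(g))$ exactly cancels the change in $\overline{P}(g)$. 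Once local invariance is checked, connectivity of $\mathcal{G}$ under these moves closes the argument.
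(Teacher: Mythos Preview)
Your reduction to the constancy of $\overline{P}(g)+\rot(\phi(g))$ is correct and is exactly the reduction the paper makes (it phrases it as $\frac{j_1}{4}+\frac{j_2}{2}=_{\rel}-\overline{P}$, which is the same thing since $\rot(\phi(g))=\frac{j_1(g)}{4}+\frac{j_2(g)}{2}$). Your base case is also right: $\phi(\mathbf{x})$ is the Seifert state, whence $\rot(\phi(\mathbf{x}))=\rot(D)$.

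Where your proposal diverges, and where it has a real gap, is in the choice of elementary moves. You propose staying inside $\mathcal{G}$ and connecting generators by two-point ``swaps'' on a pair of figure-eights and a pair of horizontal segments. Two problems arise. First, connectivity of $\mathcal{G}$ under such swaps is not obvious and you do not prove it: not every pair of figure-eights meets every pair of horizontal segments, and when a figure-eight meets a horizontal segment near a crossing there are two intersection points, so the swap is not even well specified. Second, as you yourself note, a swap can affect many corners and crossings at once, making the local verification unwieldy; you do not actually carry it out.

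The paper sidesteps both difficulties by enlarging $\mathcal{G}$ to the set $\mathcal{G}'$ of $n$-tuples with one point per figure-eight (dropping the one-per-horizontal-segment condition). The gradings $\overline{P}$, $j_1$, $j_2$ are purely local along figure-eights and extend immediately to $\mathcal{G}'$, even though $\phi$ does not. On $\mathcal{G}'$ one can slide a single intersection point along its figure-eight to an adjacent position; connectivity is then trivial, and each elementary move alters the picture only at one corner or one crossing, so the check that $\overline{P}$ and $\frac{j_1}{4}+\frac{j_2}{2}$ change oppositely reduces to the finite list in Figure~\ref{elem}. This extension to $\mathcal{G}'$ is the missing idea in your argument.
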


\begin{proof}
We denote by $\mathcal{G}'$  the set of unordered $n$-tuples of intersection points between horizontal segments and vertical figure-eights such that each (vertical) figure-eight 
contains exactly one point. Observe that $\mathcal{G}$ is a subset of $\mathcal{G}'$. Moreover, the gradings $j_1$, $j_2$ and $P$ on $\mathcal{G}$ extend in a natural way to $\mathcal{G}'$. Since
$$j({\bf x})=\rot(D)+w(D)\mbox{ , } i({\bf x})=0 \mbox{ and } P({\bf x})=-\rot(D)-w(D),$$
we have $$P({\bf x})=i({\bf x})-j({\bf x}).$$

\begin{figure}[!h]
\begin{center}
\includegraphics{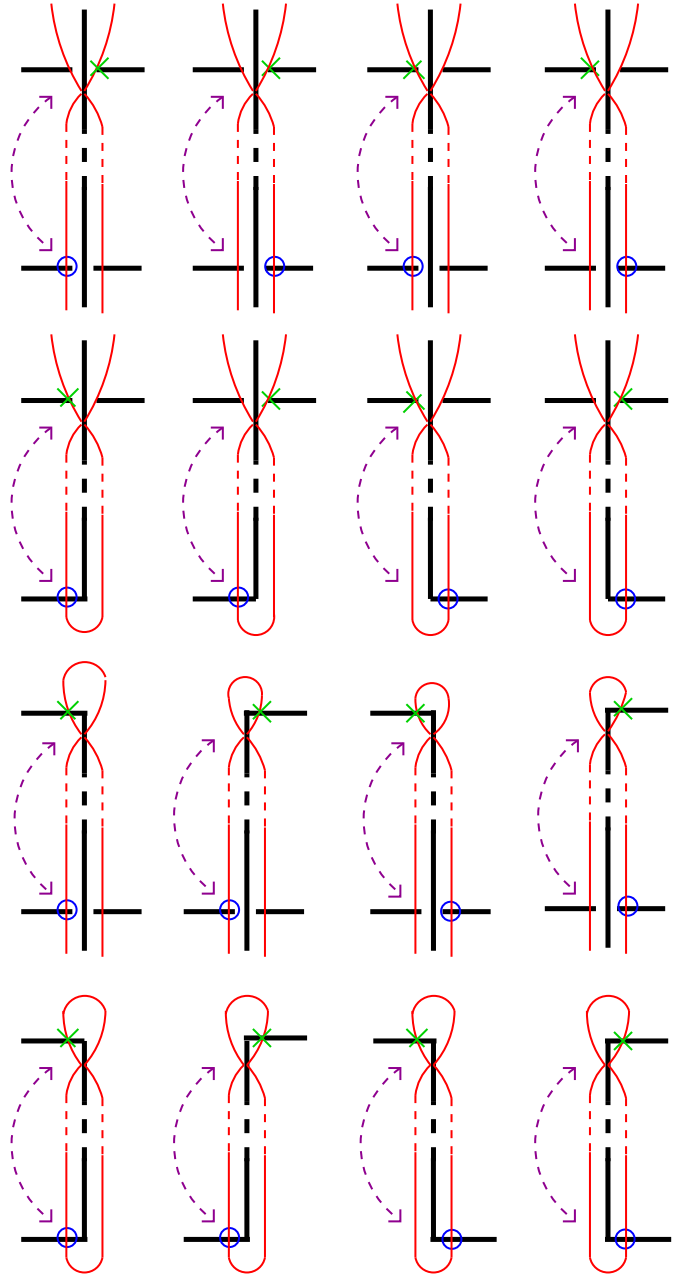}
\caption{Elementary moves}
\label{elem}
\end{center}
\end{figure}

We prove that $P$ and $i-j$ are equal as relative gradings. Notice that one can go from an element of $\mathcal{G}'$ to another by a sequence of elementary moves changing the position of just one intersection point on one figure-eight. It remains to check that $P$ and $i-j$ change by the same amount when such an elementary move is performed. The different cases of elementary moves are presented in Figure \ref{elem}. Moreover, combining Equations (\ref{combDef0}) and (\ref{combDefJ}), we obtain
$$j(g)=\frac{j_1(g)}{4}+\frac{j_2(g)}{2}+i(g)+w(D),\mbox{ for all } g\in \mathcal{G}.$$
Thus, we need to prove that $\frac{j_1}{4}+\frac{j_2}{2}$ and $-P$ vary by the same amount when  an elementary move is performed. This can be checked directly from the pictures in Figure \ref{elem}. Notice that there are a priori 16 other cases to check corresponding to changes of positions of the self-intersections of the figure-eights in the twelve first elementary moves depicted in Figure \ref{elem}, but these cases follow from the following observation: A change of position of the self-intersection of the figure-eight together with a change of position of the generator by switching side if it sits between the old and the new self-intersections changes neither $\frac{j_1}{4}+\frac{j_2}{2}$ nor $-P$, see Figure \ref{flip}.

\begin{figure}[!h]
\begin{center}
\includegraphics{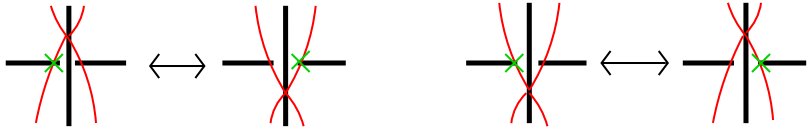}
\caption{Change of position of a self-intersection point of a figure-eight}
\label{flip}
\end{center}
\end{figure}

\end{proof}

\begin{cor}
For any $g \in \mathcal{G}$, we have the equality $$P(g)=-\rot(g)-n_++n_-.$$
\end{cor}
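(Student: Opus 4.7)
The plan is to derive this formula as an immediate consequence of Theorem \ref{P=i-j}. By that theorem, $P(g) = i(g) - j(g)$ for every $g \in \mathcal{G}$, so the task reduces to a bookkeeping computation using the already-established expressions for $i$ and $j$ in terms of the associated enhanced Kauffman state $\phi(g)$.

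First I would recall equations (\ref{JeRI}) and (\ref{IeIbar}), which give
\[
j(g) = \rot(\phi(g)) + \overline{i}(\phi(g)) + n_+ - 2n_- \quad\text{and}\quad i(g) = \overline{i}(\phi(g)) - n_-.
\]
Subtracting these yields
\[
i(g) - j(g) = \bigl(\overline{i}(\phi(g)) - n_-\bigr) - \bigl(\rot(\phi(g)) + \overline{i}(\phi(g)) + n_+ - 2n_-\bigr) = -\rot(\phi(g)) - n_+ + n_-,
\]
so that the $\overline{i}(\phi(g))$ terms cancel. Combining with Theorem \ref{P=i-j} gives $P(g) = -\rot(\phi(g)) - n_+ + n_-$, which is the claimed identity (interpreting $\rot(g)$ as $\rot(\phi(g))$, the rotation number of the oriented resolution circles of the enhanced Kauffman state $\phi(g)$).

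There is no real obstacle here beyond making the notational identification $\rot(g) := \rot(\phi(g))$ explicit; this is consistent with the way $\rot$ was previously used on enhanced Kauffman states in the definition of $j$ and with the sanity check $P(\mathbf{x}) = -\rot(D) - w(D)$ already appearing in the proof of Theorem \ref{P=i-j}, since $\phi(\mathbf{x})$ realises the oriented resolution of $D$ so that $\rot(\phi(\mathbf{x})) = \rot(D)$ and $n_+ - n_- = w(D)$.
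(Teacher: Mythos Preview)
Your proof is correct and follows exactly the paper's approach: combining Theorem~\ref{P=i-j} with equations~(\ref{JeRI}) and~(\ref{IeIbar}) to cancel $\overline{i}(\phi(g))$ and obtain the result. Your explicit remark identifying $\rot(g)$ with $\rot(\phi(g))$ is a welcome clarification of a point the paper leaves implicit.
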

\begin{proof}
Combining Equations (\ref{JeRI}) and (\ref{IeIbar}) with the previous theorem, we obtain the desired equality.
\end{proof}
\begin{thm}\label{J=j}
Let $D$ be an oriented planar rectangular diagram, for all $g\in\mathcal{G}$,
$$J(g)=j(g).\label{theoJ}$$
\end{thm}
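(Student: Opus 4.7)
The natural approach is to follow the strategy of Theorem \ref{P=i-j}: establish the identity at the base point ${\bf x}$ and then check that $J$ and $j$ vary by the same amount under each elementary move of Figure \ref{elem}, which suffices since any two elements of $\mathcal{G}$ are connected by a sequence of such moves. The base case is an immediate calculation: since $Q({\bf x})=0$ by the normalisation of $Q$, the definition of $J$ gives $J({\bf x})=2T({\bf x})-2T({\bf x})+\rot(D)+w(D)=\rot(D)+w(D)$, and the remark following the definition of $j$ records that $j({\bf x})$ equals the same quantity.

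For an elementary move $g\to g'$ changing the intersection point on a single figure-eight, both variations are entirely local. The variation $\Delta J = 2\Delta T - 2\Delta Q$ splits into a short inversion count $\Delta T$ involving only the coordinates of $g$ that are close to the affected figure-eight, and the winding contribution $\Delta Q$ of the loop $\gamma(g,g')$, which is a closed curve in the plane running from $g_i$ to $g'_i$ along the horizontal segment(s) and back along the figure-eight; its winding numbers are therefore supported on the punctures adjacent to the affected figure-eight. Similarly, $\Delta j = \tfrac14\Delta j_1 + \Delta j_2 + \tfrac12\Delta j_3$ depends only on the corners and crossings adjacent to the figure-eight where the move happens, since the term $\tfrac32(n_+-n_-)$ is a global invariant of $D$.

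It then suffices to verify $\Delta J = \Delta j$ case by case for the twelve basic moves of Figure \ref{elem}. As in the previous theorem, the flip trick of Figure \ref{flip} — simultaneously moving a self-intersection of a figure-eight and switching the side of the generator lying between the old and new self-intersections — leaves both $J$ and $j$ unchanged, which collapses the sixteen additional sub-cases onto the twelve basic ones (for $J$ one notes that the loop produced by such a flip bounds a disc in the complement of the punctures, and that $T$ is insensitive to the swap). The main obstacle is the careful bookkeeping of $\Delta Q$ when $g_i$ and $g'_i$ lie on different horizontal segments sharing the same figure-eight: the loop $\gamma(g,g')$ then picks up non-trivial algebraic winding around one or two nearby punctures, and this winding must balance exactly against the simultaneous changes of $\Delta j_2$ at the crossing closest to the move and of $\Delta j_3$ at the crossings whose resolution is now read from a reshuffled figure-eight/horizontal matching. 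Once these local contributions are tabulated in each configuration of Figure \ref{elem}, the identity $\Delta J = \Delta j$ follows by direct inspection.
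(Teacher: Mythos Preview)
Your strategy mirrors that of Theorem~\ref{P=i-j}, but there is a structural obstruction that the paper's proof is specifically designed to avoid. The elementary moves of Figure~\ref{elem} connect elements of the enlarged set $\mathcal{G}'$ (one point per figure-eight, no condition on horizontal segments), and the proof of Theorem~\ref{P=i-j} works precisely because $P$, $j_1$ and $j_2$ extend naturally to $\mathcal{G}'$. Neither $J$ nor $j$ does. The relative grading $Q$ is defined via the loop $\gamma(g,h)$, whose ``go along the horizontal segments to $h$'' step requires exactly one point of $h$ on each horizontal segment; and $j_3$ refers, at each crossing without a nearby $\overline{g_i}$, to \emph{the} $g_j$ on the same horizontal segment, which need not exist or be unique in $\mathcal{G}'$. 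So the moment an elementary move carries $g_i$ across a crossing onto a different horizontal row, the quantities $J(g')$ and $j(g')$ you want to compare are simply undefined. Your parenthetical ``horizontal segment(s)'' acknowledges the issue without resolving it.

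Relatedly, the locality claim for $\Delta T$ is not correct: when $g_i$ crosses a horizontal row, $T$ changes by $\pm 1$ according to whether the other point $g_k$ on that row lies to the left or to the right of the figure-eight, and $g_k$ may sit arbitrarily far away. The same far-away $g_k$ governs the new $j_3$-contribution at the crossing just passed. These non-local effects do ultimately cancel against each other, but organising that cancellation is the real content of the theorem, not something one can read off the local pictures in Figure~\ref{elem}.

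The paper's proof handles this by decomposing $4Q =_{\rel} 2Q_{\loc} + Q_{\far}$, where $Q_{\loc}$ is genuinely local (and tied to $P$, hence to $j_1,j_2$, via Theorem~\ref{P=i-j}) while $Q_{\far}$ is expressed through the inversion counts $\mathcal{I}$, $\mathcal{I}^-$ against the punctures. The core step is then the Claim $4T - j_3 + \tfrac{j_1}{2} =_{\rel} Q_{\far}$, proved not by elementary moves but by summing contributions over all pairs of one horizontal and one vertical ``good line'' and checking that each pair's total contribution is independent of $g$. This pairwise bookkeeping is exactly what absorbs the non-local dependence of $T$ and $j_3$ that your approach leaves unaccounted for.
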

\begin{proof}
Since
$$j({\bf x})=\rot(D)+w(D)= J({\bf x}),$$
it is sufficient to prove that $j$ and $J$ are equal as relative gradings.
\begin{figure}[!h]
\begin{center}
\includegraphics[scale=0.4]{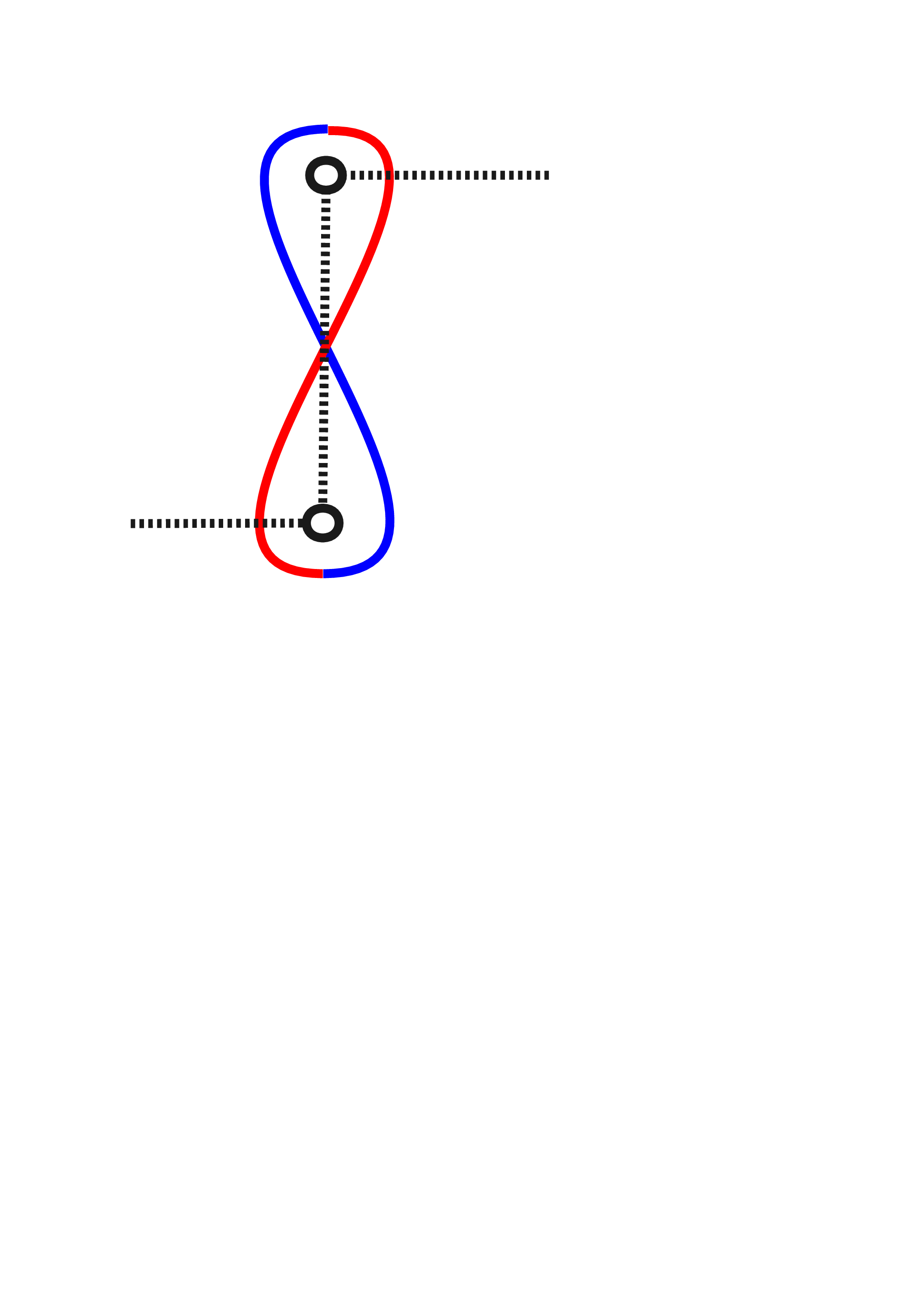}
\caption{The two parts of the figure-eight}
\label{DividedFigEight}
\end{center}
\end{figure}

For $A$ and $B$ two sets of intersection points, we define $\mathcal{I}^-(A,B)$ as the 
number of pairs $(a_1,a_2)\in A$ and $(b_1,b_2)\in B$ such that $a_1<b_1$ and $a_2>b_2$. Given $g=(g_1,\ldots,g_n)\in \mathcal{G}$, we define $Q_{far}(g)$ as follows: 
$$Q_{far}(g)=\mathcal{I}(g,\mathbb{X}\cup \mathbb{O})+\mathcal{I}(\mathbb{X}\cup \mathbb{O},g)-\mathcal{I}^-(g,\mathbb{X}\cup \mathbb{O})-\mathcal{I}^-(\mathbb{X}\cup \mathbb{O},g).$$

We divide any figure-eight in a positive and a negative part by cutting it at its lowest and heighest points, Figure \ref{DividedFigEight}. We define $Q_{\loc}(g)$ as the number of intersection points of $g$ on the positive side of the figure-eight (in blue) minus the number of points on the negative side of the figure-eight (in red). We observe that \begin{equation}2P=_{\rel}Q_{\loc} \label{PeQ}.\end{equation} (The notation $=_{\rel}$ means that the two sides of the equation are equal up to an additive constant.)

The relative grading $Q$ between two generators $g$ and $h$ is defined as the total winding number of a set of closed paths around the punctures. Let us call $L$ the set of vertical and horizontal half-lines originating from punctures. The winding number of a path can be computed by counting algebraically (with signs) the intersection points of the path with $L$ and dividing by 4. Using this alternative definition of the winding number and examining separetely the contribution of the pieces of the path on horizontal lines and on figure-eights, we check that $4Q(g)-4Q(h)=2Q_{\loc}(g)-2Q_{\loc}(h)+Q_{far}(g)-Q_{far}(h)$, in other words, that, as relative gradings, $4Q=_{\rel}2Q_{\loc}+Q_{far}$. 

Let us call {\it good} a line either vertical or horizontal that meets two punctures. We call intersection points of good lines {\it good points}. 
\begin{claim}
We have the equality of relative gradings:
\begin{equation}
4T-j_3+\frac{j_1}2=_{\rel}Q_{far}.
\label{lemDC}
\end{equation}
\end{claim}
\begin{proof}
For each pair of one horizontal good line and one vertical good line, we examine the contributions of all pairs of one puncture or intersection point on the vertical line and one puncture or intersection point on the horizontal line to $4T$, $j_3$, $\frac{j_1}2$ and $Q_{far}$. We consider different cases according to where the two good lines intersect. We check that for each pair of good lines, the total contribution to Equation (\ref{lemDC}) from points on the two lines are independent of the positions of the intersection points of $g$. For example, if the two good lines intersect on a crossing of the rectangular diagram, only $4T$ and $j_3$ are affected and the contribution of the intersection points to $4T$ and $j_3$ cancel. Since each puncture or intersection point appears once on a vertical good line and once on an horizontal good line, taking the sum over all possible pairs of good lines gives twice Equation \ref{lemDC}.
\end{proof}
Using Equation (\ref{lemDC}) divided by two and the definition of $j$ by Equation (\ref{combDefJ}), we get
$$j=_{\rel}\frac{j_1}4+j_2+2T-\frac{Q_{far}}2+\frac{j_1}4.$$
Using Equation (\ref{PeQ}) and the formulation of $P$ in terms of $j_1$ and $j_2$, we have
$$-Q_{\loc}=_{\rel}\frac{j_1}2+j_2.$$
Combining the last two equations gives
$$j=_{\rel}2T-\frac{Q_{far}}2-Q_{\loc}.$$
Since $\frac{Q_{far}}2+Q_{\loc}=_{\rel}2Q$ and $J=_{\rel}2(T-Q)$,
we deduce $j=_{\rel}J$ from which $j=J$ follows.
\end{proof}

\section{Relation with Khovanov homology}

In \cite{M}, Manolescu noticed that, in the case of the trefoil, a free abelian group whose generators are labelled by Bigelow's generators and are graded according to the grading of their label, has ranks in the different gradings compatible with having as homology the Khovanov homology. We prove that this observation holds for any link. For this purpose, we start with the Khovanov chain complex, i.e the chain complex whose generators are enhanced Kauffman states $\mathcal{K}$ and cancel all generators lying in $\mathcal{K}\setminus\mathcal{H}$. We end up with a chain complex homotopic to the original one, with a set of generators in one-to-one correspondance with $\mathcal{G}$. This reduction can be done canonically over
$\mathbb{Q}$. Over $\mathbb{Z}$ the reduction seems to depend on some arbitrary choices.\\

Our reduction over $\mathbb{Q}$ to a smaller complex is {\it canonical} in the following sense. Two oriented rectangular diagrams that are sent to each other by  diffeomorphisms of the plane that send horizontal segments to horizontal segments have isomorphic chain complexes. This implies that the chain complex generated by Bigelow's intersection points is computable from the ambient isotopy type of the flattenend braid diagram and is as such a candidate for geometric interpretation.
\begin{proof}{[of Theorem \ref{main}]}
\noindent We review a few facts about Khovanov homology (for precise definitions and more \cite{Kh, Vi}). The Khovanov chain complex is a bigraded complex
$$C_{\Kh}=\bigoplus_{i,j} C_{\Kh}^{i,j}$$
with generators associated to enhanced Kauffman states
$$C_{\Kh}^{i,j}=\bigoplus_{s \in \mathcal{K}, i(s)=i, j(s)=j} \mathbb{Z}\cdot s.$$

Observe that $C_{\Kh}^i=\bigoplus_{j\in \mathbb{Z}}C_{\Kh}^{i,j}$ is generated by all enhanced Kauffman states obtained by orienting the circles in a resolution $r$ such that $i(r)=i$. Hence, as usual, $C_{\Kh}^i$ can be seen as $\bigoplus_{r, i(r)=i} V^{\otimes k(r)}\{i+n_+-n_-\}$ where $V$ is the two dimensional graded  $\mathbb{Z}$-module spanned by $1$ and $x$ with $j(1)=1$ and $j(x)=-1$, $k(r)$ is the number of circles in the resolution $r$ and $\{\cdot\}$ is the shift operator in homological grading. One can identify an element in $V(r)=V^{\otimes k(r)}$ with a choice of orientations of the $k(r)$ circles. A circle oriented counterclockwise corresponds to a $1$ and a circle oriented clockwise corresponds to an $x$. Given a diagram $D$, with $k$ crossings there are $2^k$ resolutions of $D$. One can see these resolutions as lying on the vertices of an hypercube of dimension $k$. Hence, the vector spaces $V(r)$ are indexed by the vertices of this hypercube. Similarly, each enhanced Kauffman state sits on a vertex of the hypercube. Two enhanced Kauffman states are connected by the differential of Khovanov homology if and only if they are on both ends of an edge of the hypercube and look around a crossing like one of the 18 pairs in Figure \ref{diff}. Locally, all possible non-zero differentials are depicted in Figure \ref{diff}.
 Naturally, one would have to introduce signs to fully specify the differential \cite{Kh, Vi} .\\

We define a new grading $R$ on the Khovanov chain complex as follows. Given an oriented link diagram $D\in \mathbb{R}^2$, consider the underlying  oriented 4-valent graph $\Gamma$ in which each crossing of $D$ is replaced by a 4-valent vertex. Choose a point in each connected component of $\mathbb{R}^2\setminus\Gamma$. This produces a family of points $(x_i)_{i\in I}$, where $I$ is a finite set. For an enhanced Kauffman state $s$, we define
 $R(s)$ to be the winding number of the oriented circles of $s$ around the $x_i$'s.
 In Figure \ref{diff}, the blue cross and the green cross correspond to $x_i$'s.\\

The six differentials in the first column of Figure \ref{diff} are connecting generators belonging to $\mathcal{K}\setminus\mathcal{H}$. These differentials respect the grading $R$. In addition, the remaining twelve other types of differentials stricly decrease $R$. Therefore, the increasing filtration associated to the grading $R$ is respected by the differential of the Khovanov complex. As a consequence, the set of elements of $\mathcal{K}\setminus\mathcal{H}$ together with the part of the differential that respects the grading $R$ is a chain complex. Moreover, this chain complex is homotopic to zero. This follows from the fact that this chain complex is a shifted direct sum of hypercube chain complexes, where each hypercube chain complex is obtained by the usual procedure of flattening an hypercube \cite[p.17]{OS2}  and replacing every vertex by a copy of $\mathbb{Z}$ and every arrow by $+\Id$ or $-\Id$. Each of those hypercube chain complexes is clearly homotopic to zero.\\

\begin{figure}[!h]
\begin{center}
\includegraphics{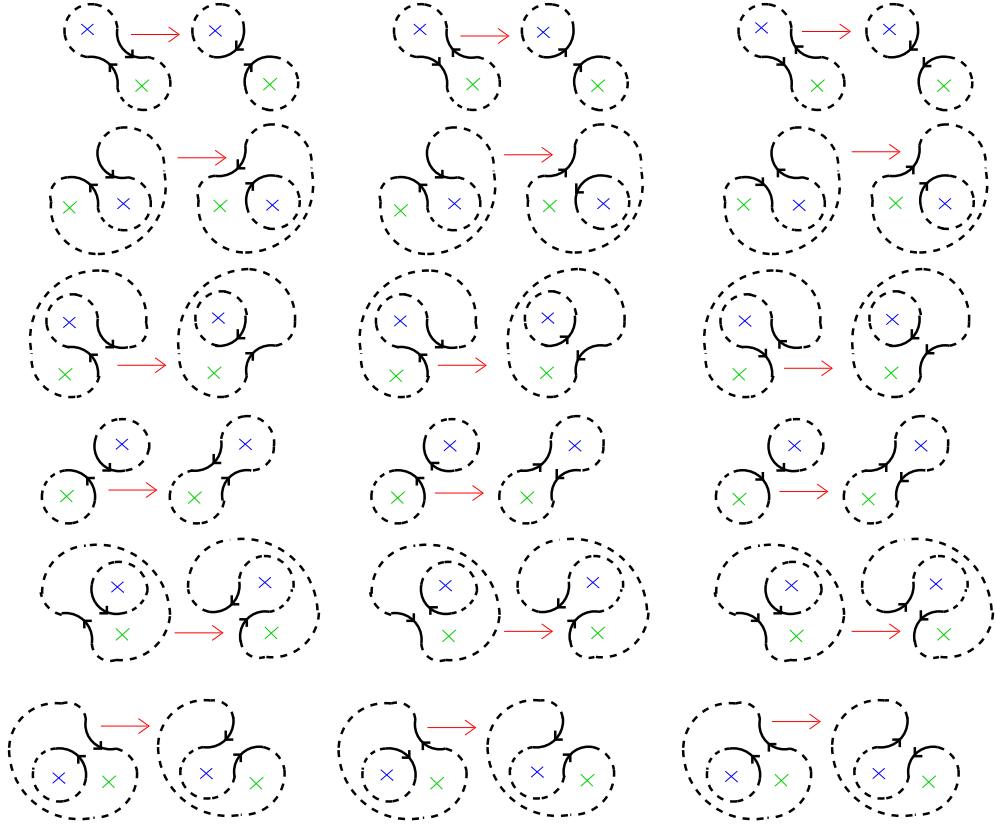}
\caption{Non-zero differentials in Khovanov complex}
\label{diff}
\end{center}
\end{figure}

We state a lemma in homological algebra generalizing Gaussian elimination. The lemma is proved in the Appendix.
\begin{lem}\label{Gauss}
Given a chain complex $C=\bigoplus_{i\in\mathbb{Z}}C^i$, $C\stackrel{\delta}{\longrightarrow}C$, $\delta$ of degree $+1$ such that $C=A\oplus B$ as graded abelian group and $\delta=\left(\begin{array}{cc} a & d\\ c & b\end{array}\right)$ with $ A\stackrel{a}{\longrightarrow}A$, $ B\stackrel{d}{\longrightarrow}A$, $ A\stackrel{c}{\longrightarrow}B$ and $B\stackrel{b}{\longrightarrow}B$. Suppose that $(B,b)$ is a chain complex homotopic to zero. Then, for $h:B{\longrightarrow}B$ of degree $-1$ such that $-\Id=hb+bh$, the chain complex $(C,\delta)$ is homotopic to $(A,a+dhc)$.
\end{lem}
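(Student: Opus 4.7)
The plan is to exhibit $(C,\delta)$ as an internal direct sum of two subcomplexes $C_A$ and $C_B$, with $C_A$ isomorphic to $(A, a+dhc)$ and $C_B$ isomorphic to the acyclic complex $(B,b)$. Since direct-summing with a contractible complex does not change the homotopy type, this will yield $(C,\delta) \simeq (A, a+dhc)$ immediately.

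First I would unpack $\delta^2=0$ into the four relations $a^2+dc=0$, $ad+db=0$, $ca+bc=0$, and $cd+b^2=0$. Combining the last with the hypothesis $b^2=0$ gives the crucial simplification $cd=0$. Once this is in hand, the identities $ad=-db$, $ca=-bc$ together with $hb+bh=-\Id$ reduce $(a+dhc)^2$ to $-dc - d(bh+hb)c = 0$, so that $(A, a+dhc)$ is in fact a chain complex.

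Next I would introduce two embeddings into $C$, namely $G : A \to C$ sending $x \mapsto (x, hcx)$ and $\iota : B \to C$ sending $y \mapsto (-dhy, y)$. A direct computation using $cd=0$, $ad=-db$, $ca=-bc$ and $hb+bh=-\Id$ shows that $G$ is a chain map from $(A, a+dhc)$ to $(C,\delta)$ and that $\iota$ is a chain map from $(B,b)$ to $(C,\delta)$. Both are visibly injective, so their images $C_A := G(A)$ and $C_B := \iota(B)$ are subcomplexes of $C$ isomorphic respectively to $(A, a+dhc)$ and to $(B,b)$.

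It then remains to prove the decomposition $C = C_A \oplus C_B$ as a graded abelian group. Any $(x,y) \in C$ admits the explicit splitting $(x+dhy-dh^2cx,\, hcx) + (-dhy+dh^2cx,\, y-hcx)$, whose first summand lies in $C_A$ and second in $C_B$; both the fact that the two summands reconstruct $(x,y)$ and that $C_A \cap C_B = 0$ reduce once more to $cd=0$. The subtle point here is finding the right twisted embeddings: the naive inclusions $x \mapsto (x,0)$ and $y \mapsto (0,y)$ are not compatible with $\delta$, and the correction terms $hc$ on the $B$-coordinate of $G$ and $-dh$ on the $A$-coordinate of $\iota$ are simultaneously forced by the chain-map condition and by the requirement that the two images span $C$. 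Once this decomposition is established, combining it with the isomorphisms above gives $(C,\delta) \cong (A, a+dhc) \oplus (B,b)$, and the contractibility of $(B,b)$ concludes the proof.
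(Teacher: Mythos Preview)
Your argument is correct and takes a genuinely different route from the paper.  Both proofs begin identically: unpack $\delta^2=0$ into the four relations, extract $cd=0$ from $b^2=0$, and verify that $(A,a+dhc)$ is a chain complex.  From there the paper builds an explicit homotopy equivalence: chain maps $f=(1\ \ dh):C\to A$ and $g=\begin{pmatrix}1\\ hc\end{pmatrix}:A\to C$, together with homotopies $H=\begin{pmatrix}0&0\\0&h\end{pmatrix}$ on $C$ and $H'=dh^3c$ on $A$ witnessing $gf\sim\Id_C$ and $fg\sim\Id_A$.  Your $G$ is exactly the paper's $g$, but instead of looking for a homotopy inverse you introduce the second embedding $\iota:y\mapsto(-dhy,y)$ and show that $C=G(A)\oplus\iota(B)$ as chain complexes.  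This is slightly stronger than the paper's statement (a genuine direct-sum splitting rather than just a homotopy equivalence) and bypasses the computation of $H'=dh^3c$, which is the most delicate step in the paper's proof.  On the other hand, the paper's explicit homotopy data is what one would want if one later needed to transport additional structure along the equivalence.
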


Theorem \ref{main} follows from Theorem \ref{P=i-j}, Theorem \ref{J=j} and Lemma \ref{Gauss}.
\end{proof}

\begin{cor}[Bigelow \cite{Bi}]
Given an oriented rectangular diagram $D$, 
$$V(D)(q)=\sum_{g \in \mathcal{G}} {(-1)}^{P(g)} q^{J(g)}$$
is the Jones polynomial.
\end{cor}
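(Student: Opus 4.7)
The plan is to obtain this corollary as an immediate consequence of Theorem \ref{main} via a graded Euler characteristic argument. Observe first that $\sum_{g\in\mathcal{G}} (-1)^{P(g)} q^{J(g)}$ is, tautologically, the $J$-graded Euler characteristic of the chain complex $(B,\delta)$ supplied by Theorem \ref{main}: the grading $P$ plays the role of the homological grading (since $\delta$ raises $P$ by one) and $J$ is preserved by $\delta$, so this sum is a legitimate bigraded Euler characteristic.

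Next, I would invoke the standard fact that the graded Euler characteristic of a chain complex is unchanged by passage to homology. By Theorem \ref{main}, the homology of $(B,\delta)$ is Khovanov homology $\Kh(L)$, and Theorems \ref{P=i-j} and \ref{J=j} identify the $(P,J)$-bigrading on this homology with Khovanov's bigrading via $P=i-j$ and $J=j$. Combined with the foundational property of Khovanov homology \cite{Kh,Vi} — that its graded Euler characteristic is the (unnormalized) Jones polynomial — this yields the claimed identity $V(D)(q)=\sum_g (-1)^{P(g)} q^{J(g)}$.

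I do not anticipate any real obstacle: the substantive content is packaged in Theorem \ref{main}. The only thing to verify is a bit of sign and normalization bookkeeping, namely that under the reindexing $(i,j)\mapsto(i-j,j)=(P,J)$ the usual expression $\sum_{i,j}(-1)^i q^j \dim \Kh^{i,j}(L)$ for the Jones polynomial matches the sign convention of $V(D)(q)$ adopted in the paper. This is ensured inside the proof of Theorem \ref{main}: the Gaussian elimination of Lemma \ref{Gauss} cancels pairs of Kauffman states in $\mathcal{K}\setminus\mathcal{H}$ whose $J$-gradings agree and whose $P$-gradings differ by one, so those pairs contribute zero to the Euler characteristic, and the restriction of the sum to $\mathcal{G}$ is legitimate.
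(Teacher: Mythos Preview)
Your approach is correct and is precisely the one implicit in the paper, which states the corollary without proof as an immediate consequence of Theorem~\ref{main}: take the bigraded Euler characteristic of $(B,\delta)$, pass to homology, and invoke the fact that the graded Euler characteristic of Khovanov homology is the Jones polynomial.

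One small point worth making explicit in your sign bookkeeping: under the reindexing $P=i-j$, $J=j$, one has $(-1)^{P}=(-1)^{i}(-1)^{-j}$, which is not literally $(-1)^{i}$. What saves you is that for a fixed link diagram the quantum grading $j$ has constant parity across all enhanced Kauffman states (the Khovanov differential preserves $j$, and flipping a circle's orientation changes $\rot$ by $\pm 2$), so $(-1)^{-j}$ is a global sign absorbed into the normalization of $V(D)$. Your final paragraph gestures at this via the cancellation of pairs in $\mathcal{K}\setminus\mathcal{H}$, but that argument only shows the sum over $\mathcal{G}$ agrees with the sum over $\mathcal{K}$; the identification with $\sum(-1)^{i}q^{j}$ still needs the parity observation.
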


\begin{cor}\label{cormain}
There exists a differential $\delta$ on the $\mathbb{Q}$-vector space $B$ generated by Bigelow's generators $\mathcal{G}$, that respects $J$, increases $P$ by 1 and such that the homology of the chain complex $(B,\delta)$ is the Khovanov homology. Moreover, over $\mathbb{Q}$, this differential is canonical.
\end{cor}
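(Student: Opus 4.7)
The plan is to deduce existence directly from Theorem~\ref{main} by extension of scalars, and then to pin down a canonical choice in the construction to establish the canonicity claim. Tensoring the integral chain complex $(B,\delta)$ from Theorem~\ref{main} with $\mathbb{Q}$ immediately yields a bigraded $\mathbb{Q}$-vector space on the generating set $\mathcal{G}$, with the differential $\delta\otimes\mathbb{Q}$ still respecting $J$, increasing $P$ by $1$, and homology equal to $\Kh(L;\mathbb{Q})$. So existence over $\mathbb{Q}$ requires no new work.

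For canonicity, I would first isolate the only arbitrary ingredient in the proof of Theorem~\ref{main}, namely the contracting homotopy $h$ of the acyclic subcomplex spanned by $\mathcal{K}\setminus\mathcal{H}$ used in Lemma~\ref{Gauss}. To make this homotopy canonical, I would equip $\mathbb{Q}\langle\mathcal{K}\rangle$ with the $\mathbb{Q}$-bilinear form making the enhanced Kauffman states an orthonormal basis. Over a field, the acyclic complex $(\mathbb{Q}\langle\mathcal{K}\setminus\mathcal{H}\rangle,b)$ then admits a unique Hodge-type orthogonal splitting $B_i=\ker(b_i)\oplus\ker(b_i)^{\perp}$; this selects a unique contracting homotopy $h$, vanishing on $\ker(b_{i-1})^{\perp}$ and equal to $-\bigl(b_{i-1}|_{\ker(b_{i-1})^{\perp}}\bigr)^{-1}$ on $\ker(b_i)=\mathrm{image}(b_{i-1})$. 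Feeding this $h$ into Lemma~\ref{Gauss}, the induced differential $\delta=a+dhc$ on $\mathbb{Q}\langle\mathcal{G}\rangle$ becomes entirely unambiguous.

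To complete the canonicity statement, I would verify that any diffeomorphism of the plane preserving horizontality induces a bijection of enhanced Kauffman states that commutes with the Khovanov differential; such a bijection is automatically an isometry for the chosen bilinear form, therefore commutes with the orthogonal projections defining $h$, and consequently induces an isomorphism between the reduced complexes on $\mathbb{Q}\langle\mathcal{G}\rangle$.

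The main obstacle I anticipate is checking that the hypercube decomposition of $(\mathbb{Q}\langle\mathcal{K}\setminus\mathcal{H}\rangle,b)$ invoked in the proof of Theorem~\ref{main} is compatible with the orthogonal splitting, i.e.\ that the canonical $h$ restricts to the canonical contracting homotopy of each elementary hypercube summand. This should follow because the hypercube summands are spanned by disjoint subsets of $\mathcal{K}\setminus\mathcal{H}$ and are therefore automatically orthogonal under our bilinear form, so the Hodge splitting is the direct sum of the corresponding local Hodge splittings. Once this is established, evaluating $a+dhc$ reduces to the local models of Figure~\ref{diff}, and $\delta$ can in principle be written in closed form on each $J$-graded piece.
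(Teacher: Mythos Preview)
Your proof is correct. The existence half is identical to the paper's. For canonicity, however, you take a genuinely different route: the paper builds the canonical contracting homotopy on $(\mathbb{Q}\langle\mathcal{K}\setminus\mathcal{H}\rangle,b)$ directly from the hypercube decomposition, observing that each $n$-cube summand carries $n$ obvious null-homotopies $h_1,\dots,h_n$ (one per coordinate direction, each given by minus the inverse of the edge maps in that direction) and then takes their average $h=\tfrac{1}{n}\sum_i h_i$. You instead invoke the Hodge-theoretic pseudo-inverse determined by the inner product in which the enhanced Kauffman states are orthonormal. Both constructions are manifestly invariant under the diagram symmetries in question and both feed into Lemma~\ref{Gauss} to produce a canonical $\delta=a+dhc$; the paper's version is more combinatorial and explicit, while yours is more structural and makes isometry-invariance automatic. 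It is perhaps worth noting that the two homotopies actually agree on each hypercube summand: realising an $n$-cube as the exterior algebra on $n$ generators with $b=\sum_i e_i\wedge(\,\cdot\,)$, one has $b^{*}=\sum_i\iota_{e_i}$ and $bb^{*}+b^{*}b=n\cdot\Id$, so the Hodge homotopy is $-b^{*}/n=-\tfrac{1}{n}\sum_i\iota_{e_i}$, which is precisely the paper's average. One small slip in your write-up: your $h$ should be declared to vanish on $(\ker b_i)^{\perp}\subset B_i$ rather than on $\ker(b_{i-1})^{\perp}$, which sits in the wrong degree; the intended construction is clear regardless.
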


\begin{proof}
The first part of the statement follows from Theorem \ref{main}.
The second part follows from the fact that, over $\mathbb{Q}$, there is a canonical homotopy for Lemma \ref{Gauss}. We construct this homotopy.
The set of elements of $\mathcal{K}\setminus\mathcal{H}$ together with the part of the differential that leaves the grading $R$ invariant is a chain complex denoted by $C$. 
This chain complex is a shifted direct sum of hypercube chain complexes, where each hypercube chain complex is obtained by the usual procedure of flattening an hypercube and, in this case, replacing every vertex by a copy of $\mathbb{Q}$ and every arrow by $+\Id$ or $-\Id$. As explained before, those hypercubes are null homotopic.  In particular, given a decomposition of an hypercube of dimension $n$ into two hypercubes of dimension $n-1$, one can take as homotopy minus the inverse of the differentials between the two hypercubes of dimension $n-1$. Since there are $n$ ways to split an hypercube of dimension $n$ into two hypercubes of dimension $n-1$, there are $n$ homotopies of the type described above. We denote them by $h_1,h_2,\ldots,h_n$. Each of them comes from a choice of splitting. We consider $h=\frac{1}{n}\sum_{i=1}^n h_i$ ; it is a homotopy to zero.
Summing over all hypercubes composing the chain complex $C$, the average homotopies described above, one obtains a homotopy to zero for the whole complex $C$.
Hence, over $\mathbb{Q}$, there is a canonical choice of homotopy for the application of Lemma \ref{Gauss} and therefore, a canonically defined differential on the set of Bigelow's generators.
\end{proof}

Theorem \ref{main} and Corollary \ref{cormain} remain true for the odd Khovanov homology \cite{ORS}. More precisely, the proof only depends on the fact that one can endow enhanced Kauffman's states with a differential that respects the increasing filtration associated to the grading $R$ and such that the part of the differential that respects the grading $R$ is connecting elements in $\mathcal{K}\setminus\mathcal{H}$.  The differential constructed in \cite{ORS} is up to signs the original Khovanov differential. Hence, we also have the following theorem:
\begin{thm}\label{odd}
There exists a differential $\delta$ on the free abelian group $B$ generated by Bigelow's generators $\mathcal{G}$, that respects $J$, increases $P$ by 1 and such that the homology of the chain complex $(B,\delta)$ is the odd Khovanov homology. Moreover, over $\mathbb{Q}$, this differential is canonical.
\end{thm}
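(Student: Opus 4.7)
The plan is to run exactly the same argument that proved Theorem \ref{main}, observing that every step goes through in the odd setting because the odd Khovanov differential differs from the ordinary one only by signs.

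First I would recall, from \cite{ORS}, that the odd Khovanov chain complex has the same underlying bigraded abelian group $C_{\Kh}$ generated by enhanced Kauffman states, and that its differential $\delta_{\mathrm{odd}}$ connects exactly the same ordered pairs of enhanced Kauffman states as the ordinary Khovanov differential depicted in Figure \ref{diff}; the only modification is the rule that assigns signs $\pm 1$ to the eighteen local pictures. In particular, $\delta_{\mathrm{odd}}(s) = \sum_{s'} \varepsilon(s,s')\, s'$ where $s'$ ranges over the same set of enhanced Kauffman states as in the even case, and $\varepsilon(s,s') \in \{+1,-1\}$.

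Next I would reuse verbatim the grading $R$ defined in the proof of Theorem \ref{main}: for an enhanced Kauffman state $s$, $R(s)$ is the sum of winding numbers of its oriented circles around a chosen family of base-points, one in each component of $\mathbb{R}^2\setminus \Gamma$. Since $R$ depends only on the oriented circles of $s$, and the six differentials in the first column of Figure \ref{diff} preserve $R$ while the other twelve strictly decrease it, this dichotomy is unaffected by sign changes. Hence $R$ defines an increasing filtration respected by $\delta_{\mathrm{odd}}$, and the associated graded piece supported on $\mathcal{K}\setminus\mathcal{H}$ consists precisely of those six kinds of edges, now weighted by $\pm 1$ rather than $+1$.

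Then I would show that this associated graded piece is still null-homotopic. As in the even case, it decomposes as a shifted direct sum of hypercube complexes, each obtained by flattening a subcube of the resolution hypercube and placing $\pm \Id$ on every edge. Any such signed hypercube complex, whose squares anticommute by virtue of being a subcomplex of a chain complex, is contractible: this is a standard fact, proved by induction on dimension exactly as in the unsigned case (splitting off a pair of parallel $(n-1)$-dimensional faces connected by an isomorphism). With this in hand, I apply Lemma \ref{Gauss} to eliminate the subcomplex supported on $\mathcal{K}\setminus\mathcal{H}$, leaving a homotopy-equivalent complex on generators in bijection with $\mathcal{H}$, hence, by the bijection $\phi$, with $\mathcal{G}$. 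The gradings $J$ and $P$ are unchanged, so Theorems \ref{P=i-j} and \ref{J=j} give that the induced differential respects $J$ and increases $P$ by $1$. The $\mathbb{Q}$-canonicity statement follows by averaging the contracting homotopies exactly as in Corollary \ref{cormain}, since that construction used only the null-homotopy of the signed hypercube pieces.

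The main obstacle, and really the only non-cosmetic point, is verifying that the sign assignment of \cite{ORS} still yields an $R$-graded piece that is a genuine chain complex (anticommuting squares on each hypercube) and that each such signed hypercube is contractible. Both reduce to the fact that the odd differential squares to zero, so its restriction to an $R$-level set squares to zero, and to the elementary lemma that any commutative or anticommutative hypercube of $\pm\Id$ maps between copies of $\mathbb{Z}$ is null-homotopic; everything else in the proof of Theorem \ref{main} is formal and carries over unchanged.
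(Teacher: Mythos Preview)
Your proposal is correct and follows essentially the same approach as the paper. The paper's argument for Theorem~\ref{odd} is exactly the observation you make: since the odd differential of \cite{ORS} agrees with the even one up to signs, it respects the filtration induced by $R$, and the $R$-preserving part still connects only elements of $\mathcal{K}\setminus\mathcal{H}$; your write-up is simply more explicit in checking that the resulting signed hypercube complexes remain contractible and that the $\mathbb{Q}$-canonical averaging of homotopies carries over.
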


\section*{Appendix}

We give the proof of Lemma \ref{Gauss}.

\begin{proof}
We begin by proving that $(A,a+dhc)$ is a chain complex. Notice that $\delta^2=0$ and $b^2=0$ imply
\begin{eqnarray}\label{fund}  cd = & 0. \end{eqnarray}
Moreover, $\delta^2=0$ also implies 
\begin{eqnarray}  a^2+dc, & = & 0,\label{ea}\\
 ad+db & = & 0, \label{eb}\\
\end{eqnarray}
and
\begin{eqnarray}
 ca+bc & = & 0. \label{ec} \end{eqnarray}
Hence, 
\begin{eqnarray*}
(a+dhc)(a+dhc) & = & a^2+ dhca+adhc+dhcdhc\\
                                                                       & = & a^2+ dhca+adhc\\
                                                                       & = & -dc- dhbc-dbhc\\
                                                                       & = & -d(\Id+hb+bh)c\\
                                 &=& 0.
\end{eqnarray*}

\noindent We define a chain map  $f$ from $(C,\delta)$ to $(A,a+dhc)$ by the formula
$$f=\left(\begin{array}{cc} 1 & dh\end{array}\right)$$
and a chain map $g$ from $(A,a+dhc)$ to $(C,\delta)$ by the formula
$$g=\left(\begin{array}{c} 1 \\ hc \end{array}\right).$$
It is straightforward to check that $f\delta=(a+dhc)f$ and $\delta g =g(a+dhc)$ using (\ref{fund}), (\ref{ec}), (\ref{eb}) and $-\Id=hb+bh$.
We define a homotopy $H=\left(\begin{array}{cc} 0 & 0\\ 0 & h\end{array}\right)$ going from $C$ to $C$.
If follows from (\ref{fund}) and $-\Id=hb+bh$ that $gf-\Id=H\delta+\delta H$.
We define $H'$ as follows: $H'=dh^3c$.
Let us check that $gf-\Id=H'(a+dhc)+(a+dhc)H'$. We have
\begin{eqnarray}\label{zut}
gf-\Id  =  \Id+dh^2c-\Id =  dh^2c.
\end{eqnarray}
Moreover, we have
\begin{eqnarray*}
       dh^2c   & = & -dhbh^2c-dh^2bhc\\
               & = & 2dh^2c +dbh^3c+dh^3bc\\
               & = & 2dh^2c -adh^3c-dh^3ca,
\end{eqnarray*}
from which we deduce
 \begin{eqnarray} \label{zut2}
 dh^2c   & = & dh^3ca + adh^3c.
\end{eqnarray}
Hence, combining (\ref{zut}) and (\ref{zut2}), we obtain
$$gf-\Id = dh^3ca + adh^3c.$$
Moreover, \begin{eqnarray*}
 H'(a+dhc)+(a+dhc)H'&  = & dh^3c(a+dhc)+(a+dhc)dh^3c \\
& = & dh^3ca+adh^3c.
\end{eqnarray*}
\end{proof}

Jean-Marie Droz, \emph{Institut f\"ur Mathematik, Universit\"at Z\"urich, Winterthurerstrasse 190, CH-8057 Z\"urich, Switzerland.}\\
E-mail address: jdroz@math.unizh.ch\\

\noindent Emmanuel Wagner, \emph{Department of Mathematics, University of Aarhus, DK-8000, Denmark.}\\
E-mail address: wagner@imf.au.dk

\end{document}